\theoremstyle{plain}
\newtheorem{theorem}{Theorem}[section]
\newtheorem{proposition}[theorem]{Proposition}
\numberwithin{equation}{section}
\theoremstyle{definition}
\newtheorem{definition}[theorem]{Definition}
\newtheorem{example}[theorem]{Example}
\newcommand{\C}{\mathbb{C}}
\newcommand{\Q}{\mathbb{Q}}
\newcommand{\R}{\mathbb{R}}
\newcommand{\Z}{\mathbb{Z}}
\newcommand{\CP}{\mathbb{C}P}
\newcommand{\fF}{\mathfrak{F}}
\newcommand{\cA}{\mathcal{A}}
\newcommand{\cF}{\mathcal{F}}
\newcommand{\cS}{\mathcal{S}}
\newcommand{\cT}{\mathcal{T}}
\newcommand{\cZ}{\mathcal{Z}}
\newcommand{\cP}{\mathcal{P}}
\newcommand{\cQ}{\mathcal{Q}}
\DeclareMathOperator{\bideg}{bideg}
\DeclareMathOperator{\Tor}{Tor}
\DeclareMathOperator{\conv}{conv}
\DeclareMathOperator{\codim}{codim}
\DeclareMathOperator{\ord}{ord}
\begin{document}
\title[C-rigid polytope not B-rigid]{Example of C-rigid polytopes which are not B-rigid}

\author[S.~Choi]{Suyoung Choi}
\address{Department of mathematics, Ajou University,
206, World cup-ro, Yeongtong-gu, Suwon, 16499, Republic of Korea}
\email{schoi@ajou.ac.kr}
\thanks{The first named author was supported by Basic Science Research Program through the National Research Foundation of Korea(NRF) funded by the Ministry of Science, ICT \& Future Planning(NRF-2016R1D1A1A09917654).}

\author[K.~Park]{Kyoungsuk Park}
\address{Department of mathematics, Ajou University,
206, World cup-ro, Yeongtong-gu, Suwon, 16499, Republic of Korea}
\email{bluemk00@ajou.ac.kr}

\subjclass[2010]{52B35, 14M25, 	05E40, 55NXX}


\keywords{cohomologically rigid, B-rigid, quasitoric manifold, simple polytope, Peterson graph}

\date{\today}

\begin{abstract}
A simple polytope $P$ is said to be \emph{B-rigid} if its combinatorial structure is characterized by its Tor-algebra, and is said to be \emph{C-rigid} if its combinatorial structure is characterized by the cohomology ring of a quasitoric manifold over $P$.
It is known that a B-rigid simple polytope is C-rigid.
In this paper, we, further, show that the B-rigidity is not equivalent to the C-rigidity.
\end{abstract}

\maketitle


\section{Introduction}
Let $P$ be a simple polytope of dimension $n$.
A closed, smooth manifold of dimension $2n$ is called a \emph{quasitoric manifold} over $P$ if it admits a locally standard action of the $n$-dimensional torus $T^n$ whose orbit space can be identified with $P$ (see \cite{Davis-Januszkiewicz1991} for more details).
One typical example of quasitoric manifolds is a complex projective space $\CP^n = \C^{n+1}\setminus \{O\} / \sim$ where $(x_0, x_1, \ldots, x_n) \sim (rx_0, rx_1, \ldots, rx_n)$ for all non-zero real numbers $r$.
One can see that a $T^n$-action on $\C^{n+1} \setminus \{O\}$ defined by
$$
    (t_1, \ldots, t_n) \cdot (x_0, x_1, \ldots, x_n) = (x_0, t_1 x_1, \ldots, t_n x_n )
$$ induces a locally standard $T^n$-action on $\CP^n$, and its orbit space is an $n$-dimensional simplex $\Delta^n$.
Hence, $\CP^n$ is a quasitoric manifold over $\Delta^n$.
Here is one naive question: can $\CP^n$ be a quasitoric manifold over a different simple polytope $Q$ other than $\Delta^n$?
The answer is ``no''.
It is well-known that for a quasitoric manifold $M$ over $P$, the Betti numbers of $M$ coincide with the $h$-numbers of $P$.
Since $\Delta^n$ is the only simple polytope whose $h$-vector is $(1,1, \ldots, 1)$ which is the sequence of the Betti numbers of $\CP^n$, only $\Delta^n$ can be an orbit space of $\CP^n$.

This phenomenon raises one fundamental question which asks how much combinatorial information of $P$ is decided by the topology of a quasitoric manifold $M$ over $P$.
Masuda and Suh \cite{Masuda-Suh2008} dealt with it as the property of simple polytope.
Throughout this paper, $H^\ast(X)$ denotes the integral cohomology ring of a topological space $X$.

\begin{definition}
  A simple polytope $P$ is said to be \emph{(toric) cohomologically rigid} (or \emph{C-rigid}) if there is no simple polytope $Q \not\approx P$ such that $H^{\ast}(M)\cong H^{\ast}(N)$ for some quasitoric manifolds $M$, $N$ over $P$, $Q$, respectively.
\end{definition}

We note that there are many simple polytopes which do not support any quasitoric manifold.
Since the ``C-rigidity'' requires the existence of quasitoric manifold over a given polytope, there is no canonical way to define the C-rigidity of such polytopes.
Thus there has been confusion; some literatures such as \cite{Choi-Panov-Suh2010} define that such polytope is not C-rigid, but some literatures such as \cite{BEMPP} define that such polytopes are C-rigid as their conventions.
However, it does not matter because the C-rigidity of $P$ should be considered only when $P$ supports a quasitoric manifold.

One important step on the theory of rigidity of simple polytopes is to find some combinatorial invariants of $P$ determined by the cohomology ring of a quasitoric manifold over $P$.
Let $P$ be an $n$-dimensional simple polytope with $m$ facets and $\cA=\Q[v_1,\ldots,v_m]$ the polynomial ring over $\Q$ with $\deg v_i =2$ for all $i$.
We consider $\Q$ as an $\cA$-module via the map $\cA\rightarrow\Q$ sending each $v_i$ to $0$.
The \emph{Stanley-Reisner ring} of $P$ is the quotient ring $\Q(P)=\Q[v_1,\ldots,v_m]\slash I_P,$ where $I_P$, the \emph{Stanley-Reisner ideal} of $P$, is the homogeneous ideal generated by all square-free monomials $v_{i_1}\cdots v_{i_r}$ such that $F_{i_1}\cap\cdots\cap F_{i_r}=\emptyset$.
We also regard $\Q(P)$ as an $\cA$-module.

Let $\Lambda[u_1,\ldots,u_m]$ be an exterior algebra.
Then, we have a differential bigraded algebra $R=\Lambda[u_1,\ldots,u_m]\otimes \cA$ with map $d \colon R\rightarrow R$, where $\bideg u_i=(-1,2)$, $\bideg v_i=(0,2)$, $du_i=v_i$, $dv_i=0$ and $\otimes$ is a tensor product over $\Q$.
Then, $R$ is a free $\cA$-module.
Let $R^{-i}=\Lambda^i[u_1,\ldots,u_m]\otimes \cA$, where $\Lambda^i [u_1, \ldots, u_m]$ is the submodule of $\Lambda[u_1,\ldots,u_m]$ spanned by monomials of length $i$.
Then, we have the free resolution of $\Q$, known as the \emph{Koszul resolution}, as follows:
$$
  0 \rightarrow R^{-m} \stackrel{d}{\rightarrow} \cdots \stackrel{d}{\rightarrow} R^{-1} \stackrel{d}{\rightarrow} \cA \stackrel{d}{\rightarrow} \Q \rightarrow 0.
$$
By taking $\otimes_{\cA}\Q(P)$ to the Koszul resolution, we obtain the Tor-module of $P$
$$\Tor(P) := H (R\otimes_{\cA}\Q(P)) = H (\Lambda [u_1,\ldots,u_m] \otimes \Q(P)).$$
Furthermore, it has the natural algebra structure induced from the Koszul resolution, so it is called the \emph{Tor-algebra} $\Tor(P)$ of $P$.
Since $\Tor(P)$ has the bigraded structure, the bigraded Betti number $\beta^{-i,2j}(P)$ is also defined for $P$.

It is shown in \cite[Lemma~3.7]{Choi-Panov-Suh2010} that, for two quasitoric manifolds $M$ and $N$ over $P$ and $Q$, respectively, if $H^\ast(M)\cong H^\ast(N)$ as graded rings, then $\Tor(P) \cong \Tor(Q)$ as bigraded rings.
This fact stimulates to consider the hierarchy of rigidities of simple polytopes (see \cite{Choi-Kim2011} and \cite{Buchstaber2008}).

\begin{itemize}
  \item $P$ is \emph{combinatorially rigid} (or \emph{A-rigid}) if there is no $n$-dimensional simple polytope $Q\not\approx P$ such that $\beta^{-i,2j}(P)=\beta^{-i,2j}(Q)$ for all $i,j$.
  \item $P$ is \emph{B-rigid} if there is no $n$-dimensional simple polytope $Q\not\approx  P$ such that $\Tor(P)\cong \Tor(Q)$ as bigraded rings.
\end{itemize}

In addition, we have the following implications.
\begin{enumerate}
  \item If $P$ is A-rigid, then $P$ is B-rigid.
  \item If $P$ supports a quasitoric manifold and $P$ is B-rigid, then $P$ is C-rigid.
\end{enumerate}

It is natural to ask whether the converse of the above statements hold or not.
In \cite{Choi2013}, the first named author found the counterexample of the reverse implication of (1); there is a $3$-dimensional B-rigid simple polytope with $11$ facets which is not A-rigid.
However, the reverse implication of (2) has been open (see the remark in Section~3 of \cite{BEMPP}).

In this paper, we shall provide a counterexample of the reverse implication of (2), that is, there is a C-rigid simple polytope which is not B-rigid while it supports a quasitoric manifold.
More precisely, in Section~\ref{sec:C-rigid but B-rigid}, we provide two distinct simple polytopes $\cP$ and $\cQ$ of dimension $5$ having $8$ facets satisfying the following:
\begin{enumerate}[(a)]
  \item both $\cP$ and $\cQ$ support quasitoric manifolds,
  \item $\Tor(\cP) \cong \Tor(\cQ)$ as bigraded rings,
  \item there is no other polytope whose Tor-algebra is isomorphic that of $\cP$, and
  \item no two quasitoric manifolds $M$ over $\cP$ and $N$ over $\cQ$ have the isomorphic cohomology rings, that is, $H^\ast(M) \not\cong H^\ast(N)$ as graded rings.
\end{enumerate}

It proves that the A-, B-, and C-rigidities are not equivalent to each other under the condition that $P$ supports a quasitoric manifold.

This paper is mainly based on a part of the Ph.D. thesis of the second named author \cite{Park2015_thesis} supervised by the first named author.

\section{Simple polytopes with a few facets}
In this section, we recall some useful facts on $n$-dimensional simple polytopes with $n+3$ facets.
The structure of $n$-simple polytope $P$ with $n+3$ facets is well-known (see, for example, \cite{Grunbaum2003}): $P$ can be obtained by a sequence of wedge operations from either a cube or the dual of cyclic polytope $C^\ast_{2k-2,2k+1}$ for some $k\geq 2$.
If $P$ is obtained from a cube by a sequence of wedge operations, then $P$ is the product of three simplices.
It is shown in \cite[Theorem~5.3]{Choi-Panov-Suh2010} that every product of simplices is A-rigid, and so is $P$.
Hence, in the remain of the section, we only consider the case where $P$ is obtained from a cyclic polytope.
It is convenient to represent $P$ by the Gale diagram in $\R^2$.

For $k\geq 2$, let $P_{2k+1}$ be a regular $(2k+1)$-gon in $\R^2$ with center at the origin $O$ with the vertex set $[2k+1] = \{1,2,\ldots, 2k+1\}$ in counterclockwise order.
For a given surjective map $\phi \colon \fF=\{F_1, \ldots, F_{n+3}\} \rightarrow [2k+1]$, we construct a simplicial complex $K$ on $[n+3]$ by
$$I (\subset [n+3]) \mbox{ is a simplex of } K \iff O \in \conv\{\phi(F_i) \mid i\in[n+3] \setminus I \}.$$
It is known that $K$ is a boundary complex of some simple $n$-polytope $P$ with the facet set $\fF$.
One observes that the combinatorial structure of $P$ only depends on $|\phi^{-1}(1)|, \ldots, |\phi^{-1}(2k+1)|$ up to rotating and reflecting of $P_{2k+1}$.
Hence, an $n$-dimensional simple polytope $P$ with $n+3$ facets, which is not a product of three simplices, is representable on $P_{2k+1}$ with the assigned numbers $[a_1, \ldots, a_{2k+1}]$ where $a_1 + \cdots + a_{2k+1} = n+3$ up to rotation and reflection.

Now, let us compute the bigraded Betti numbers of $P$ represented on $P_{2k+1}$ with assigned numbers $[a_1, \ldots, a_{2k+1}]$.
We firstly remark that every simple polytope has the following duality on its bigraded Betti numbers.
\begin{proposition}
  Let $Q$ be an $n$-dimensional simple polytope with $m$ facets. Then
  \begin{enumerate}
    \item $\beta^{-i, 2j}(Q) = 0$ if $i<0$ or $i> m-n$,
    \item $\beta^{0,0}(Q)=\beta^{-(m-n),2m}(Q)=1$, and
    \item $\beta^{-i,2j}(Q)=\beta^{-(m-n)+i,2(m-j)}(Q)$.
  \end{enumerate}
\end{proposition}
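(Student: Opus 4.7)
The plan is to derive all three parts from the fact that the Stanley--Reisner ring $\Q(Q)$ of a simple polytope is Gorenstein, equivalently from Poincar\'e duality on the moment-angle manifold $\cZ_Q$ associated with $Q$. The key input is the well-known bigraded isomorphism $\Tor(Q)\cong H^\ast(\cZ_Q;\Q)$, under which bidegree $(-i,2j)$ corresponds to total cohomological degree $-i+2j$, together with the fact that $\cZ_Q$ is a closed smooth orientable manifold of real dimension $m+n$ whenever $Q$ is simple.

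For (1), I would observe that $\Q(Q)$ is a Cohen--Macaulay graded $\cA$-module of Krull dimension $n$, since the dual simplicial complex of $Q$ is a simplicial $(n-1)$-sphere. The Auslander--Buchsbaum formula then forces $\mathrm{pd}_\cA \Q(Q)=m-n$, so $\beta^{-i,2j}(Q)=0$ for $i>m-n$; the vanishing for $i<0$ is immediate from the definition of $\Tor$. For (3), I would invoke Poincar\'e duality on $\cZ_Q$: since $\cZ_Q$ is a closed oriented $(m+n)$-manifold, cup product induces a non-degenerate pairing between cohomology groups in complementary total degrees, and the Gorenstein symmetry of the minimal free resolution of $\Q(Q)$ refines this to a bigraded duality placing the top class in bidegree $(-(m-n),2m)$. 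Multiplication by a generator of this top bidegree then sends the $(-i,2j)$-component of $\Tor(Q)$ isomorphically onto the $(-(m-n)+i,2(m-j))$-component, which is exactly (3). Finally, for (2), $\beta^{0,0}(Q)=1$ is immediate, since the degree-$0$ part of $\Tor(Q)$ equals $(\Q(Q))_0=\Q$; applying (3) with $i=j=0$ then yields $\beta^{-(m-n),2m}(Q)=1$.

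The main obstacle in this approach is the bigraded refinement of Poincar\'e duality. Standard Poincar\'e duality on $\cZ_Q$ matches only total cohomological degrees, whereas (3) asserts a duality at the level of the bigrading. Pinning down that the canonical class sits in bidegree $(-(m-n),2m)$ requires tracking both the homological degree (controlled by the projective dimension $m-n$) and the internal degree (controlled by the sum of multidegrees of the top generator of the canonical module of $\Q(Q)$); this Gorenstein bookkeeping is where the careful argument must be made. An alternative route which avoids this is to apply Hochster's formula directly and use Alexander duality on the simplicial sphere dual to $Q$, but the bidegree accounting there is essentially equivalent.
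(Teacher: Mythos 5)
The paper states this proposition without proof, presenting it as a recalled standard fact; there is therefore no argument of the authors' to compare your proposal against, and the question is simply whether your outline is sound. It is. For (1), the Stanley--Reisner ring of the boundary complex of the dual simplicial polytope is Cohen--Macaulay of Krull dimension $n$ (Reisner's criterion applied to a simplicial $(n-1)$-sphere), so Auslander--Buchsbaum gives projective dimension $m-n$ and hence the vanishing for $i>m-n$, while vanishing for $i<0$ is trivial. For (3), the relevant fact is that Stanley--Reisner rings of simplicial spheres are Gorenstein (Stanley), and you correctly identify the bigraded refinement as the only delicate point: the cleanest way to settle it is the Avramov--Golod theorem, which says a graded $k$-algebra is Gorenstein if and only if its Koszul homology (equivalently $\Tor_{\cA}(\Q(Q),\Q)$) is a Poincar\'e duality algebra, and then the socle-degree bookkeeping places the top class in bidegree $(-(m-n),2m)$ since its total degree must be $m+n=\dim\cZ_Q$. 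Interpreting this as Poincar\'e duality on the moment-angle manifold $\cZ_Q$ is a legitimate geometric packaging of the same algebraic statement. For (2), $\beta^{0,0}=1$ is immediate and the rest follows from (3). Your alternative route via Hochster's formula and Alexander duality on the dual sphere would also work and avoids invoking the manifold structure of $\cZ_Q$, which is a mild advantage if one wants the statement for arbitrary Gorenstein* complexes rather than just polytopal ones. Overall the proposal is correct; the one thing to make explicit in a full write-up is the Gorenstein self-duality of the minimal free resolution, rather than deducing the bidegree-level duality from topological Poincar\'e duality alone, since the latter a priori only matches total degrees.
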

By the above proposition, we have $\beta^{-1,2j}(P)=\beta^{-2,2(n+3-j)}(P)$, and, hence, $\{\beta^{-1,2j}(P) \mid j \geq 2 \}$ completely determines all bigraded Betti numbers of $P$.
It should be noted that $\beta^{-1,2j}$ is equal to the number of degree $2j$ monomial elements in a minimal basis of the Stanley-Reisner ideal $I_P$.
Thus it is enough to find the minimal monomial basis of $I_P$.
By definitions of $I_P$ and a Gale-diagram, we have the following proposition.
\begin{proposition}\label{prop:generator_in_gale}
  Let $P$ be an $n$-dimensional simple polytope with $n+3$ facets $F_1,\ldots,F_{n+3}$.
  Let $v_1,\ldots,v_{n+3}$ be indeterminates corresponding to the facets of $P$ and $\phi$ the corresponding map of $P$.
  Then, the followings are equivalent:
  \begin{enumerate}
    \item $v_{i_1}\cdots v_{i_\ell}$ is a monomial generator of $I_P$,
    \item $F_{i_1}\cap\cdots\cap F_{i_\ell} =\emptyset$,
    \item $O \notin \conv\{ \phi(F_j) \mid j\in[n+3]\setminus \{i_1,\ldots,i_\ell\} \}$
  \end{enumerate}
\end{proposition}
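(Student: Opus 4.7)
The plan is to prove the triple equivalence by the chain $(1) \Leftrightarrow (2) \Leftrightarrow (3)$, each step reducing to an unwrapping of a definition already available in the excerpt, so the argument is essentially a translation between the three languages (algebra, combinatorics of the face poset, geometry of the Gale diagram) that the section has set up.

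For $(1)\Leftrightarrow(2)$, I would simply quote the definition of the Stanley--Reisner ideal given at the beginning of the introduction: $I_P$ is the ideal generated by the square-free monomials $v_{i_1}\cdots v_{i_r}$ for which $F_{i_1}\cap\cdots\cap F_{i_r}=\emptyset$. Thus $v_{i_1}\cdots v_{i_\ell}$ belongs to the natural generating set of $I_P$ if and only if the corresponding facet intersection is empty, which is precisely condition (2). For $(2)\Leftrightarrow(3)$, I would first recall that in a simple polytope the boundary simplicial complex $K$ is the nerve of the facet cover, so $\{i_1,\ldots,i_\ell\}$ is a face of $K$ exactly when $F_{i_1}\cap\cdots\cap F_{i_\ell}\neq\emptyset$; consequently (2) says that $\{i_1,\ldots,i_\ell\}$ is a non-face of $K$. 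Then I would invoke the Gale-diagram description of $K$ recalled earlier in the section, namely that $I\subset[n+3]$ is a simplex of $K$ iff $O\in\conv\{\phi(F_j)\mid j\in[n+3]\setminus I\}$; taking $I=\{i_1,\ldots,i_\ell\}$ and negating both sides gives condition (3).

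The main obstacle is essentially nonexistent; the proposition is a bookkeeping statement whose entire content is built into the definitions already presented. The only place where a moment's care is required is the interpretation of the phrase \emph{monomial generator}: if one reads this as \emph{element of a minimal monomial basis of} $I_P$, one should additionally note that minimal monomial generators of $I_P$ correspond bijectively to minimal non-faces of $K$, and repeating the same chain of equivalences at the minimal level gives the refined statement. No polytope-specific computation, and in particular no use of the classification of $n$-polytopes with $n+3$ facets or of the $(2k+1)$-gon Gale picture, is needed for the proof itself; the proposition is set up as the general tool that the subsequent bigraded-Betti-number computations will exploit.
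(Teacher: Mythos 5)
Your proof is correct and matches the paper's treatment: the paper states the proposition with only the remark that it follows ``by definitions of $I_P$ and a Gale-diagram,'' and your two-step chain $(1)\Leftrightarrow(2)\Leftrightarrow(3)$ is exactly that unwrapping, including the correct reading of ``monomial generator'' (the paper itself immediately follows the proposition with a separate sentence handling the minimal-basis refinement, as you anticipate).
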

Moreover the monomial $v_{i_1}\cdots v_{i_\ell}$ is an element of the minimal monomial basis if and only if $v_{i_1}\cdots v_{i_\ell} \in I_P$ and $F_J = \bigcap_{j\in J} F_j \ne \emptyset$ for any $J \subsetneq \{i_1,\ldots,i_\ell\}$.
Now consider the collection $\cS$ of subsets of $[n+3]$ which satisfies (3) in Proposition~\ref{prop:generator_in_gale}, then $\cS$ has a natural poset structure by an inclusion. Note that $\cS$ is nonempty and finite. So we can choose the set $\cT$ of the minimal elements of $\cS$.
Then $\cT$ supports the minimal monomial basis of $I_P$.
If $P$ is represented on $P_{2k+1}$ with assigned numbers $[a_1, a_2, \ldots, a_{2k+1}]$, then we can assign each $i \in [2k+1]$ by each element $e_i = \{ i_1, \ldots, i_\ell \} \subset [n+3]$ of $\cT$ such that $\phi(F_{j}) \in \{i, i+1, \ldots, i+k-1\}$ modulo $2k+1$ for all $j \in e_i$ and $\phi(F_{j'}) \not\in \{i, \ldots, i+k-1\}$ modulo $2k+1$ for all $j' \not\in e_i$.
We, therefore, have
$$\beta^{-1,2j}(P) = |\{ i \in [2k+1] \mid |e_i|=j \} = | \{ i \in [2k+1] \mid a_i + a_{i+1} + \cdots + a_{i+k-1} = j \}|,$$
where $a_{2k+2} =a_1, a_{2k+3}=a_2, \ldots, a_{3k}= a_{k-1}$.

On the other hand, Erokhovets \cite{Erokhovets2011} showed that the Tor-algebra $\Tor(P)$ is completely determined by the bigraded Betti numbers of $P$. Indeed, $\Tor(P)$ is isomorphic to the cohomology ring of the moment-angle complex $\cZ_P$ (see \cite{Buchstaber-Panov2015}), and it is known in \cite{Lopez1989} that
$$
    \cZ_P \simeq \#_{i=1}^{2k-1} S^{2 \phi_i-1} \times S^{2 \varphi_{i +k - 1}-2},
$$
where $\phi_i = a_i + a_{i+1} + \cdots + a_{i+k-2}$ and $\varphi_i = a_i + a_{i+1} + \cdots + a_{i+k-1}$ for all $i \in [2k+1]$, and indices are taken module $2k+1$.
We remark that the A-rigidity of $P$ is equivalent to the B-rigidity, that is, if $P$ is B-rigid, then $P$ is A-rigid as well. However, $P$ is rarely to be A-rigid (and B-rigid) as seen in \cite{Bosio_A-rigid}.

In particular, we, therefore, note that two simple polytopes represented on $P_5$ with assigned numbers $[a,b,c,d,e]$ and $[a',b',c',d',e']$ have isomorphic Tor-algebras if and only if $\{a+b,b+c,c+d,d+e,e+a\}=\{a'+b',b'+c',c'+d',d'+e',e'+a'\}$ as multi-sets.
We further have the following criterion to find all polytopes whose Tor-algebras are isomorphic to that of a given polytope represented on $P_5$.

\begin{proposition} \label{prop:criterion_on_the_same_betti}
  Let $P$ be the simple $n$-polytope with $n+3$ facets represented on $P_5$ with assigned number $[a,b,c,d,e]$.
  We also let $Q$ be another simple $n$-polytope with $n+3$ facets represented on $P_5$ with assigned number $[a',b',c',d',e']$.
Then, the following are equivalent:
\begin{enumerate}
  \item $\Tor(P) \cong \Tor(Q)$ as bigraded rings,
  \item $P$ and $Q$ have the same bigraded Betti numbers, and
  \item $[a',b',c',d',e']$ appears as a cyclic subgraph with $5$ vertices of the Peterson graph with assigned numbers as in Figure~\ref{peterson}.
\end{enumerate}
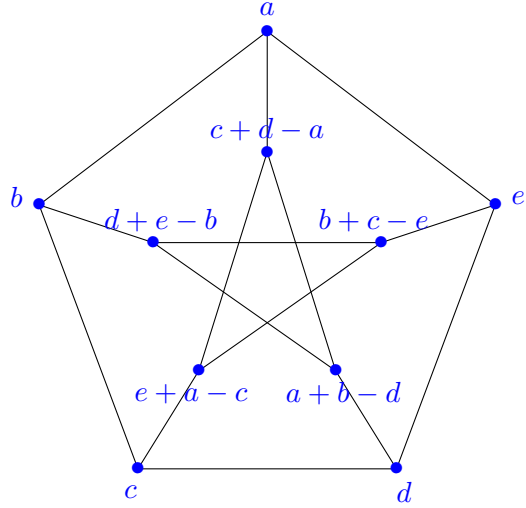
\begin{figure}
    \center
    \begin{tikzpicture}
      \draw (1,4.5) to (4,6.8); \draw (4,6.8) to (7,4.5); \draw (7,4.5) to (5.7,1); \draw (5.7,1) to (2.3,1); \draw (2.3,1) to (1,4.5);
      \draw (1,4.5) to (2.5,4); \draw (2.5,4) to (5.5,4); \draw (5.5,4) to (7,4.5);
      \draw (4,6.8) to (4,5.2);
      \draw (2.3,1) to (3.1,2.3); \draw (4,5.2) to (3.1,2.3);
      \draw (5.7,1) to (4.9,2.3); \draw (4,5.2) to (4.9,2.3);
      \draw (2.5,4) to (4.9,2.3); \draw (5.5,4) to (3.1,2.3);
      \textcolor{blue}{\node at (4,6.8) {$\bullet$}; \node at (1,4.5) {$\bullet$}; \node at (2.3,1) {$\bullet$}; \node at (5.7,1) {$\bullet$}; \node at (7,4.5) {$\bullet$}; \node at (4,5.2) {$\bullet$}; \node at (2.5,4) {$\bullet$}; \node at (3.1,2.3) {$\bullet$}; \node at (4.9,2.3) {$\bullet$}; \node at (5.5,4) {$\bullet$};
      }
      \textbf{\textcolor{blue}{\node at (4,7.1) {$a$};
      \node at (0.7,4.6) {$b$};
      \node at (2.2,0.7) {$c$};
      \node at (5.8,0.7) {$d$};
      \node at (7.3,4.6) {$e$};
      \node at (4,5.5) {$c+d-a$};
      \node at (2.6,4.3) {$d+e-b$};
      \node at (3,2) {$e+a-c$};
      \node at (5,2) {$a+b-d$};
      \node at (5.4,4.3) {$b+c-e$};
      }}

    \end{tikzpicture}
  \caption{Peterson graph: criterion for Gale diagrams on a pentagon to have the isomorphic Tor-algebras}\label{peterson}
  \end{figure}
\end{proposition}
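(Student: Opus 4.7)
The paper has essentially reduced the proposition to a single remaining equivalence. For $(1) \Leftrightarrow (2)$: one direction is trivial since bigraded Betti numbers are the dimensions of the bigraded pieces of $\Tor(P)$, and the converse follows from the cited result of Erokhovets that $\Tor(P)$ is determined by the bigraded Betti numbers. Moreover, the discussion immediately preceding the proposition shows that $(2)$ is equivalent to the multi-set equality $\{a+b, b+c, c+d, d+e, e+a\} = \{a'+b', b'+c', c'+d', d'+e', e'+a'\}$. Thus the only thing to prove is that this multi-set equality is equivalent to $(3)$.

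The key idea is to realize the labeled Peterson graph of Figure~\ref{peterson} as the Kneser graph $K(5,2)$, whose ten vertices are the two-element subsets of $\{1,\ldots,5\}$ with edges between disjoint subsets. Set $S = a+b+c+d+e$, $s_1 = a+b$, $s_2 = b+c$, $s_3 = c+d$, $s_4 = d+e$, $s_5 = e+a$, and assign to each pair $\{l,m\}$ the value $S - s_l - s_m$. A direct calculation (e.g.\ $a = S - s_2 - s_4 \leftrightarrow \{2,4\}$ and $c+d-a = S - s_1 - s_5 \leftrightarrow \{1,5\}$) shows that the ten resulting values match exactly the ten vertex labels in the figure, and that the fifteen edges drawn are exactly the pairs of disjoint two-subsets.

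For $(3) \Rightarrow$ multi-set equality: given a 5-cycle $p_1, \ldots, p_5$ of pairs in $K(5,2)$ (so $p_i \cap p_{i+1} = \emptyset$ and $|p_i \cup p_{i+1}| = 4$), one computes $\ell(p_i) + \ell(p_{i+1}) = 2S - \sum_{k \in p_i \cup p_{i+1}} s_k = s_{\tau(i)}$, where $\tau(i)$ is the unique index of $\{1,\ldots,5\}$ not in $p_i \cup p_{i+1}$. It remains to verify $\tau$ is a bijection: in any 5-cycle of $K(5,2)$ each index lies in exactly two of the five pairs (the pairs containing a fixed index form an independent set in the cycle, of maximum size $2$, and the total number of index-pair incidences equals $10$), and a short case analysis on the distance in the 5-cycle between the two pairs containing a given index shows there is exactly one $i$ with $\tau(i) = k$.

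Conversely, given $[a',b',c',d',e']$ with matching multi-set, choose a permutation $\sigma$ so that $a'+b' = s_{\sigma(1)}, b'+c' = s_{\sigma(2)}, \ldots, e'+a' = s_{\sigma(5)}$. The resulting $5\times 5$ linear system is uniquely solvable (the cycle length $5$ is odd), and using $\sum_i s_i = 2S$ one gets $a' = S - s_{\sigma(2)} - s_{\sigma(4)}$ and cyclically $x_i = S - s_{\sigma(i+1)} - s_{\sigma(i+3)}$, so $x_i$ is identified with the pair $\{\sigma(i+1), \sigma(i+3)\}$. Consecutive pairs $\{\sigma(i+1), \sigma(i+3)\}$ and $\{\sigma(i+2), \sigma(i+4)\}$ are disjoint because $\sigma$ is a permutation, and a two-line argument using the same permutation property shows all five pairs are distinct, so $(a',b',c',d',e')$ traces a genuine 5-cycle in the Peterson graph. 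The main obstacle is to notice and verify the identification with $K(5,2)$; once that is in hand, both directions reduce to routine linear algebra and a short combinatorial count.
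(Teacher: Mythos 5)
Your proof is correct, and it is a genuinely different argument from the one in the paper. The paper's proof of $(2)\Leftrightarrow(3)$ is a two-step counting argument: it first lists four ``shapes'' of $5$-cycles in the Petersen graph (outer pentagon, inner pentagram, and two mixed types), directly checks by hand that each yields the same adjacent-sum multi-set $\{a+b,b+c,c+d,d+e,e+a\}$, and tallies $24$ sequences up to rotation; it then writes the $5\times 5$ linear system, observes that the $5!$ orderings of the right-hand side give at most $24$ distinct sequences up to rotation, and deduces the bijection by matching the two counts. You bypass the case analysis and the double count entirely by recognizing the labeling in Figure~\ref{peterson} as the canonical $K(5,2)$ model of the Petersen graph, with vertex $\{l,m\}$ labeled $S-s_l-s_m$ (and indeed this identification checks out: the outer pentagon is $\{2,4\},\{3,5\},\{4,1\},\{5,2\},\{1,3\}$ and the inner pentagram is $\{1,5\},\{1,2\},\{2,3\},\{3,4\},\{4,5\}$, with the $15$ disjointness edges matching the $15$ drawn edges). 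The identity $\ell(p_i)+\ell(p_{i+1}) = s_{\tau(i)}$ with $\tau(i)$ the missing index, plus the observation that $\tau$ is a bijection on any $5$-cycle (each index sits in exactly two pairs, necessarily at cyclic distance $2$, leaving exactly one edge avoiding it), gives one direction cleanly; the reverse direction is then the same linear-algebra solve the paper does, but you read the answer $x_i = S - s_{\sigma(i+1)} - s_{\sigma(i+3)}$ directly as a walk on disjoint pairs and verify it is a genuine cycle. What your approach buys is a structural explanation of \emph{why} the Petersen graph is the right answer and a proof that does not depend on having correctly enumerated the cycle types; what the paper's approach buys is that it requires no recognition of the $K(5,2)$ model and can be verified by mechanical case-checking. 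Both are complete; yours is the more conceptual.
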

\begin{proof}
    It is enough to show that (2) and (3) are equivalent.
  We first show that the set of the sum of the adjacent 2 vertices of each subgraph with 5 vertices in the above Peterson graph is $\{a+b,b+c,c+d,d+e,e+a\}$.
  We consider 4 cases for the sequence of vertices as follow:
  \begin{itemize}
    \item[(1)] $[a,b,c,d,e]$ or reverse order,
    \item[(2)] kinds of $[a,e,d,a+b-d,c+d-a]$ or reverse order,
    \item[(3)] kinds of $[a,e,b+c-e,e+a-c,c+d-a]$ or reverse order, and
    \item[(4)] $[a+b-d,d+e-b,b+c-e,e+a-c,c+d-a]$ or reverse order.
  \end{itemize}
  One can easily check that for each case we have $\{a+b,b+c,c+d,d+e,e+a\}$ for the set of the sum of the adjacent 2 vertices, and, hence, we have $24$ sequences from the Peterson graph.

We then show that there are $24$ sequences $[a',b',c',d',e']$ satisfying
  $$\{a+b,b+c,c+d,d+e,e+a\}=\{a'+b',b'+c',c'+d',d'+e',e'+a'\}.$$
  Consider the system of equations
  $$a'+b'=A,\,b'+c'=B,\,c'+d'=C,\,d'+e'=D,\,e'+a'=E,$$
  where $\{A,B,C,D,E\}=\{a+b,b+c,c+d,d+e,e+a\}$.
  Then, we have $5!=120$ systems, and there, thus, can be at most $24$ sequences $[a',b',c',d',e']$ up to rotation.
  Therefore, each sequence $[a',b',c',d',e']$ satisfying $\{a+b,b+c,c+d,d+e,e+a\}=\{a'+b',b'+c',c'+d',d'+e',e'+a'\}$ is uniquely obtained from a cyclic subgraph with $5$ vertices  of the Peterson graph with assigned numbers as in Figure~\ref{peterson}.
\end{proof}

\begin{example} \label{ex:[3,1,2,1,1]}
    We consider the simple polytope $\cP$ represented on $P_5$ with assigned numbers $[3,1,2,1,1]$.
    Then the corresponding diagram as in Proposition~\ref{prop:criterion_on_the_same_betti} is the following.

\begin{center}
      \begin{tikzpicture}[scale=.8]
      \draw (1,4.5) to (4,6.8); \draw (4,6.8) to (7,4.5); \draw (7,4.5) to (5.7,1); \draw (5.7,1) to (2.3,1); \draw (2.3,1) to (1,4.5);
      \draw (1,4.5) to (2.5,4); \draw (2.5,4) to (5.5,4); \draw (5.5,4) to (7,4.5);
      \draw (4,6.8) to (4,5.2);
      \draw (2.3,1) to (3.1,2.3); \draw (4,5.2) to (3.1,2.3);
      \draw (5.7,1) to (4.9,2.3); \draw (4,5.2) to (4.9,2.3);
      \draw (2.5,4) to (4.9,2.3); \draw (5.5,4) to (3.1,2.3);
      \textcolor{blue}{\node at (4,6.8) {$\bullet$}; \node at (1,4.5) {$\bullet$}; \node at (2.3,1) {$\bullet$}; \node at (5.7,1) {$\bullet$}; \node at (7,4.5) {$\bullet$}; \node at (4,5.2) {$\bullet$}; \node at (2.5,4) {$\bullet$}; \node at (3.1,2.3) {$\bullet$}; \node at (4.9,2.3) {$\bullet$}; \node at (5.5,4) {$\bullet$};
      }
      \textbf{\textcolor{blue}{\node at (4,7.1) {$3$};
      \node at (0.7,4.6) {$1$};
      \node at (2.2,0.7) {$2$};
      \node at (5.8,0.7) {$1$};
      \node at (7.3,4.6) {$1$};
      \node at (4.2,5.5) {$0$};
      \node at (2.6,4.3) {$1$};
      \node at (3.2,2) {$2$};
      \node at (4.8,2) {$3$};
      \node at (5.4,4.3) {$2$};
      }}

        \end{tikzpicture}
\end{center}

    One can easily see that there are only two types of cyclic subgraphs with $5$ vertices whose assigned numbers are all positive integers:
    $[3,1,2,1,1]$ and $[2,2,2,1,1]$ up to rotating and reflecting elements.
    Let $\cQ$ be the simple polytope represented on $P_5$ with assigned numbers $[2,2,1,1,1]$.
    Hence, $\cQ$ is the only polytope whose Tor-algebra is isomorphic to that of $\cP$ with $\cP \not\approx \cQ$.
\end{example}

%
\section{Main theorem} \label{sec:C-rigid but B-rigid}
In this section, we give an example of C-rigid simple polytope which is not B-rigid, hence we show that the C-rigidity of a given simple polytope $P$ does not guarantee the B-rigidity of $P$.
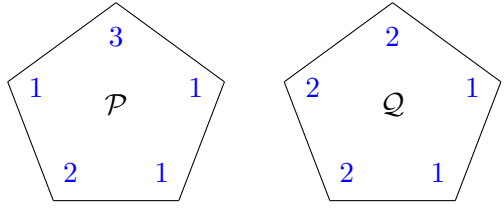
\begin{figure}
  \center
  \begin{tikzpicture}[scale=1.5]
    \tikzstyle{Element} = [draw,circle, inner sep=0pt]
    \textcolor{blue}{\node at (5,3.6) {3};
    \node at (4.3,3.15) {1};
    \node at (4.6,2.4) {2};
    \node at (5.4,2.4) {1};
    \node at (5.7,3.15) {1};}
    \draw (5,3.9) to (4.05,3.2);
    \draw (4.05,3.2) to (4.45,2.15);
    \draw (4.45,2.15) to (5.55,2.15);
    \draw (5.55,2.15) to (5.95,3.2);
    \draw (5.95,3.2) to (5,3.9);
    \node at (5,3.0) {$\cP$};
    \normalsize
  \end{tikzpicture}\quad\quad
  \begin{tikzpicture}[scale=1.5]
    \tikzstyle{Element} = [draw,circle, inner sep=0pt]
    \textcolor{blue}{\node at (5,3.6) {2};
    \node at (4.3,3.15) {2};
    \node at (4.6,2.4) {2};
    \node at (5.4,2.4) {1};
    \node at (5.7,3.15) {1};}
    \draw (5,3.9) to (4.05,3.2);
    \draw (4.05,3.2) to (4.45,2.15);
    \draw (4.45,2.15) to (5.55,2.15);
    \draw (5.55,2.15) to (5.95,3.2);
    \draw (5.95,3.2) to (5,3.9);
    \node at (5,3.0) {$\cQ$};
    \normalsize
  \end{tikzpicture}
  \caption{Gale diagrams of $\cP$ and $\cQ$ : examples of C-rigid but not B-rigid polytopes} \label{fig:Examples of C-rigid but not B-rigid polytopes}
\end{figure}
More precisely, we will show that two simple polytopes $\cP$ and $\cQ$ represented on $P_5$ with assigned numbers $[3,1,2,1,1]$ and $[2,2,2,1,1]$ as in Figure~\ref{fig:Examples of C-rigid but not B-rigid polytopes}, respectively, satisfy the following;

\begin{enumerate}[(a)]
  \item both $\cP$ and $\cQ$ support quasitoric manifolds,
  \item $\Tor(\cP) \cong \Tor(\cQ)$ as bigraded rings,
  \item there is no other polytope whose Tor-algebra is isomorphic that of $\cP$, and
  \item no two quasitoric manifolds $M$ over $\cP$ and $N$ over $\cQ$ have the isomorphic cohomology rings, that is, $H^\ast(M) \not\cong H^\ast(N)$ as graded rings.
\end{enumerate}

If $\cP$ and $\cQ$ hold the above (a)--(d), then both $\cP$ and $\cQ$ are indeed our desired examples.
The statement (b) implies that $\cP$ and $\cQ$ are not B-rigid, and both (c) and (d) imply the C-rigidity of $\cP$ and $\cQ$ under the condition (a) which confirms the existence of supporting quasitoric manifolds.

The statement (a) immediately follows the following theorem due to Erokhovets \cite{Erokhovets2011} (cf. \cite{Gretenkort-Kleinschmidt-Sturmfels1990}).

\begin{theorem}
  Let $P$ be a simple polytope represented on $P_{2k+1}$. Then, $P$ supports a quasitoric manifold if and only if $k \leq 3$.
\end{theorem}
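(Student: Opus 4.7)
The plan is to reduce to a base case via the wedge construction, then handle the positive and negative directions separately; the hard part is the obstruction for $k \ge 4$.

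\textbf{Step 1 (Reduction via wedges).} Recall that $P$ supports a quasitoric manifold if and only if there exists a \emph{characteristic function} $\lambda \colon \fF \to \Z^n$ assigning to each facet a primitive integer vector such that $\{\lambda(F) : v \in F\}$ is a $\Z$-basis of $\Z^n$ at every vertex $v$ of $P$ (Davis--Januszkiewicz). By the structural fact recalled at the start of Section~2, every $n$-simple polytope with $n+3$ facets representable on $P_{2k+1}$ is obtained by iterated wedges from the dual cyclic polytope $Q_k := (C_{2k-2,2k+1})^*$, which has dimension $2k-2$ and $2k+1$ facets. A direct check shows that the wedge operation preserves the existence of a characteristic function (pass from $\Z^n$ to $\Z^{n+1}$, split one facet into two, and assign the old vector to one copy and the old vector plus a new basis vector to the other). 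Hence the theorem reduces to: $Q_k$ supports a quasitoric manifold if and only if $k \le 3$.

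\textbf{Step 2 (Positive direction, $k \le 3$).} For $k = 2$, $Q_2$ is the pentagon, and any $2$-dimensional simple polytope supports a quasitoric manifold. For $k = 3$, $Q_3$ is a $4$-simple polytope with $7$ facets, labelled by the vertices of $P_7$; its vertices correspond, via Gale evenness, to a cyclically symmetric family of $4$-subsets of $\{1, \ldots, 7\}$. Write down an explicit map $\lambda \colon \{1, \ldots, 7\} \to \Z^4$ (for instance, $\lambda_i = e_i$ for $i \le 4$ together with well-chosen $\lambda_5, \lambda_6, \lambda_7$) and verify that each of the seven vertex-determinants equals $\pm 1$ by direct calculation.

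\textbf{Step 3 (Negative direction, $k \ge 4$).} This is the main obstacle. Reduce modulo $2$ to $\bar\lambda \colon \fF \to \mathbb{F}_2^{2k-2} \setminus \{0\}$, which is still required to produce a basis at every vertex; the existence of such a $\bar\lambda$ is a necessary condition. The vertices of $Q_k$, given by Gale evenness, form a long cyclic family, and passing from one vertex to an adjacent one replaces exactly one facet with another, forcing a precise linear relation between the two corresponding $\bar\lambda_i$ in terms of the shared $(2k-3)$ facets. Chaining these relations around a cyclic loop through all $2k+1$ indices produces a monodromy element of $GL_{2k-2}(\mathbb{F}_2)$ which must equal the identity; for $k \ge 4$ the cycle length $2k+1 \ge 9$ together with rank $2k-2$ forces the monodromy to be non-trivial, yielding a contradiction. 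This parity/length mismatch is precisely what vanishes in the borderline cases $k = 2, 3$.

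Step~2 reduces to a finite explicit check, whereas the main technical content lies in Step~3, the cyclic monodromy analysis around the vertex graph of $Q_k$; that is where I would expect the bulk of the argument.
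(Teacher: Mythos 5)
The paper does not prove this theorem; it is quoted from Erokhovets \cite{Erokhovets2011} (with a pointer to \cite{Gretenkort-Kleinschmidt-Sturmfels1990}), so there is no in-text argument to compare your proposal against. Judged on its own terms, your sketch has two genuine gaps.

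In Step~1 you only establish one direction of the wedge reduction: given a characteristic function on $P$, you lift it to the wedge. That suffices for $k\le 3$ (build one on the base, propagate up), but for $k\ge 4$ you need the converse --- a characteristic function on an iterated wedge must force one on the base $C^\ast_{2k-2,2k+1}$ --- otherwise non-existence on the base says nothing about the wedges you actually care about. That converse is true (it is the ``projection'' half of the Choi--Park wedge theorem), but it is a substantive result that must be stated and invoked; the naive lifting construction you describe does not yield it.

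Step~3 is not a proof. The vertex graph of $C^\ast_{2k-2,2k+1}$ is not a single cycle, so the ``cyclic loop through all $2k+1$ indices'' is undefined as written; passing between adjacent vertices does pin down a linear relation among the $\bar{\lambda}_i$, but you have not specified which closed walk you compose these relations along, what element of $\mathrm{GL}_{2k-2}(\Z_2)$ it produces, or why that element is trivial exactly when $k\le 3$. The sentence ``cycle length $2k+1\ge 9$ together with rank $2k-2$ forces non-trivial monodromy'' is an assertion, not an argument, and it is far from clear that the parity or length of the loop is the right invariant. You rightly flag this as the crux, but an acknowledged hole is still a hole: without carrying out the obstruction computation (or invoking Erokhovets' actual argument), the negative direction remains unestablished.
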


The statements (b) and (c) are already showed in Example~\ref{ex:[3,1,2,1,1]}.
In the remain of the section, let us show the statement (d) which would be the most difficult part.

\begin{theorem}
    The polytopes $\cP$ and $\cQ$ do not support quasitoric manifolds having the isomorphic $\Z$-cohomology rings.
\end{theorem}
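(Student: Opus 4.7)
The plan is to show that no characteristic matrix $\Lambda$ on $\cP$ and no characteristic matrix $\Lambda'$ on $\cQ$ yield quasitoric manifolds $M_\Lambda$ and $N_{\Lambda'}$ with isomorphic graded integral cohomology rings. I will use the Davis--Januszkiewicz presentation
\[
H^*(M_\Lambda;\Z)\cong \Z[v_1,\ldots,v_8]\big/\bigl(I_P+J_\Lambda\bigr),
\]
where $J_\Lambda$ is the ideal generated by the five linear forms whose coefficients are the rows of $\Lambda$.

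First I would make the Stanley--Reisner ideals explicit. Labelling the facets of $\cP$ so that the five groups on the pentagon are $\{1,2,3\},\{4\},\{5,6\},\{7\},\{8\}$, Proposition~\ref{prop:generator_in_gale} gives
\[
I_\cP=(v_1v_2v_3v_4,\ v_4v_5v_6,\ v_5v_6v_7,\ v_7v_8,\ v_8v_1v_2v_3),
\]
and for $\cQ$ with groups $\{1,2\},\{3,4\},\{5,6\},\{7\},\{8\}$,
\[
I_\cQ=(v_1v_2v_3v_4,\ v_3v_4v_5v_6,\ v_5v_6v_7,\ v_7v_8,\ v_8v_1v_2).
\]
The essential combinatorial asymmetry is that the two quartic generators of $I_\cP$ share the three common variables $v_1,v_2,v_3$, whereas those of $I_\cQ$ share only the two variables $v_3,v_4$. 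This is the phenomenon I aim to translate into a cohomological distinction.

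Next I would normalize and enumerate characteristic matrices. For each polytope, using the $GL(5,\Z)$-action from the left together with sign changes on facets, I can arrange that the five columns of $\Lambda$ indexed by a fixed vertex of $P$ form the identity $I_5$. The determinant-$\pm1$ conditions at the remaining vertices then constrain the three free columns to a finite family of integer parameters. Using the five linear forms $J_\Lambda$ to eliminate five of the variables presents $H^*(M_\Lambda;\Z)$ as a graded ring with three generators in degree $2$ and a short explicit list of polynomial relations inherited from $I_P$.

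The heart of the proof, and the main obstacle, is to extract a graded-ring invariant that takes disjoint values on the $\cP$-family and the $\cQ$-family. Natural candidates are the rank of the multiplication map $H^2\otimes H^{2k}\to H^{2k+2}$ restricted to suitable subspaces, the isomorphism type of the annihilator ideal of each primitive degree-$2$ class, or the divisibility and factorization pattern of the cubic form $H^2\otimes H^2\otimes H^2\to H^6$ determined by the top cup product. I expect the triple overlap $v_1v_2v_3$ common to the two quartic relations of $\cP$ to force a factorization of a degree-$6$ pairing, or equivalently a rank-drop in an $H^2$-multiplication map, that cannot occur for any admissible $\Lambda'$ on $\cQ$ because the two quartic relations of $\cQ$ only share a degree-two monomial. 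Once such an invariant has been pinned down, the theorem follows from a finite case check over the normalized families of characteristic matrices on each polytope.
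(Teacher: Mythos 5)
Your proposal correctly sets up the Davis--Januszkiewicz presentation, and your computation of the Stanley--Reisner ideals $I_\cP$ and $I_\cQ$ agrees with what one obtains from Proposition~\ref{prop:generator_in_gale}. However, there are two substantive gaps.

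First, you assert that after normalizing five columns of $\Lambda$ to $I_5$, the determinant conditions at the remaining vertices ``constrain the three free columns to a finite family of integer parameters.'' This is false over $\Z$: the set of integral characteristic matrices over a fixed simple polytope is typically infinite (already over the square one gets the infinite family of Hirzebruch surfaces). A case-by-case check over all of $\Z$-characteristic matrices is therefore not a finite procedure, and your plan does not close. The paper resolves exactly this issue by passing to $\Z_2$-coefficients: if $H^*(M;\Z)\cong H^*(N;\Z)$ then $H^*(M;\Z_2)\cong H^*(N;\Z_2)$, and there are only finitely many $\Z_2$-characteristic matrices over each polytope (21 over $\cP$ and 21 over $\cQ$, up to the identity normalization). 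This mod-$2$ reduction is the step that makes the finite enumeration possible, and it is missing from your argument.

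Second, your proposal does not actually produce the distinguishing invariant; it lists plausible candidates (rank of multiplication maps, annihilator ideals, factorization of the cubic form on $H^2$) and expresses an expectation that the overlap structure of the quartic relations will force a difference. That is a heuristic, not a proof. The paper's invariant is concrete: for each of the seven nonzero linear forms $\gamma$ in $\Z_2[x,y,z]$ one records the \emph{codimension} (the least degree of a $\delta$ with $\gamma\delta$ in the relation ideal) and the \emph{order} (the least $k$ with $\gamma^k$ in the ideal). These multi-sets are invariant under any linear isomorphism, and a direct computation shows they never match between an $A_i$ and a $B_j$. Until you pin down such an invariant and verify it case by case, the core of the theorem remains unproved.
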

\begin{proof}
    Let $P$ be an $n$-dimensional simple polytope with the set of facets $\fF = \{F_1, \ldots, F_m \}$.
    We recall the general fact on quasitoric manifolds due to \cite{Davis-Januszkiewicz1991} that each quasitoric manifold $M$ over $P$ is assigned by the characteristic map $\lambda \colon \fF \to \Z^n$ satisfying that whenever $F_{i_1} \cap \cdots \cap F_{i_\ell} \neq \emptyset$, the set $\{\lambda(F_{i_1}), \ldots, \lambda(F_{i_\ell})\}$ of integral vectors is unimodular.
    We denote $\lambda = (\Lambda_{i,j})_{1 \leq i \leq n, 1 \leq j \leq m}$ by an $n \times m$ integer matrix $\Lambda$ such that the $i$th column of $\Lambda$ is $\lambda(F_i)$.
    It should be noted that we may assume that the first $n$ columns of $\Lambda$ form an identity matrix of size $n$.

    Furthermore, the cohomology of $M$ associated to $\lambda$ is isomorphic to
$$
    \Z[v_1, \ldots, v_m]/ I_P + J,
$$ where $\deg v_i =2$ for all $i=1,\ldots, m$,  $I_P$ is the ideal generated by the square free monomials $v_{i_1} \cdots v_{i_r}$ such that $F_{i_1} \cap \cdots \cap F_{i_r} = \emptyset$, and $J$ is the ideal generated by $n$ linear terms $\Lambda_{i,1} v_1 + \cdots + \Lambda_{i,m} v_m$ for $ 1\leq i \leq n$.

Now, let us suppose there are quasitoric manifolds $M$ over $\cP$ and $N$ over $\cQ$ such that $H^\ast(M) \cong H^\ast(N)$ as graded rings.
Then, $H^\ast(M;\Z_2) = H^{\ast}(M) \otimes_\Z \Z_2 \cong H^{\ast}(N)\otimes_\Z \Z_2 = H^\ast(N;\Z_2)$  as graded rings.
We remark that $H^{\ast}(M; \Z_2)$ is determined by $\Z_2$-characteristic map $\lambda^\R \colon \cF \stackrel{\lambda}{\to} \Z^n \stackrel{\text{mod $2$}}{\longrightarrow} \Z_2^n$. A $\Z_2$-characteristic map also can be represented by an $n \times m$ $\Z_2$-matrix $\Lambda^\R$, called the $\Z_2$-\emph{characteristic matrix}, whose $i$th column is $\lambda^\R(F_i)$.
It should also be remarked that the first $n$ columns of $\Lambda^\R$ form an identity $\Z_2$-matrix.
From now on, we shall show that $H^{\ast}(M;\Z_2)$ is not isomorphic to $H^{\ast}(N;\Z_2)$ as graded rings for any pairs of $\Z_{2}$-characteristic maps over $\cP$ and over $\cQ$, which contradicts to the assumption, so we prove the theorem.

Here is the list of $\Z_{2}$-characteristic matrices over $\cP$.
We give a re-labeling and an order of the facet set of $\cP$ as $\fF=(F_{1_1} , F_{1_2} , F_{1_3} , F_{3_1} , F_{3_2} , F_5, F_2, F_4 )$ such that the face structure of $\cP$ is determined by a surjective map $\phi \colon \fF \to [5]$ defined by $\phi(F_{1_j})=1$ for $j=1,2,3$, $\phi(F_{3_j})=3$ for $j=1,2$, and $\phi(F_i)=i$ for $i=2,4,5$, where $[5]$ is the vertex set of a regular pentagon $P_5$ as a Gale-diagram.
In this order, we consider all possible $\Z_2$-characteristic matrices over $\cP$.
The following is the list of $A_i$'s where $\Lambda^\R = (I_5 | A_i)$ is a $\Z_{2}$-characteristic matrix over $\cP$:

\small
\noindent $A_1 = \left(\begin{array}{ccc}
    1 & 0 & 1 \\
    1 & 0 & 1 \\
    1 & 0 & 1 \\
    0 & 1 & 1 \\
    0 & 1 & 1 \\
\end{array}\right)$
$A_2 = \left(\begin{array}{ccc}
    1 & 0 & 1 \\
    1 & 0 & 1 \\
    1 & 1 & 0 \\
    0 & 1 & 1 \\
    0 & 1 & 1 \\
\end{array}\right)$
$A_3 = \left(\begin{array}{ccc}
    1 & 0 & 1 \\
    1 & 1 & 0 \\
    1 & 0 & 1 \\
    0 & 1 & 1 \\
    0 & 1 & 1 \\
\end{array}\right)$
$A_4 = \left(\begin{array}{ccc}
    1 & 0 & 1 \\
    1 & 1 & 0 \\
    1 & 1 & 0 \\
    0 & 1 & 1 \\
    0 & 1 & 1 \\
\end{array}\right)$
$A_5 = \left(\begin{array}{ccc}
    1 & 1 & 0 \\
    1 & 0 & 1 \\
    1 & 0 & 1 \\
    0 & 1 & 1 \\
    0 & 1 & 1 \\
\end{array}\right)$
$A_6 = \left(\begin{array}{ccc}
    1 & 1 & 0 \\
    1 & 0 & 1 \\
    1 & 1 & 0 \\
    0 & 1 & 1 \\
    0 & 1 & 1 \\
\end{array}\right)$
$A_7 = \left(\begin{array}{ccc}
    1 & 1 & 0 \\
    1 & 1 & 0 \\
    1 & 0 & 1 \\
    0 & 1 & 1 \\
    0 & 1 & 1 \\
\end{array}\right)$
$A_8 = \left(\begin{array}{ccc}
    1 & 1 & 0 \\
    1 & 1 & 0 \\
    1 & 1 & 0 \\
    0 & 1 & 1 \\
    0 & 1 & 1 \\
\end{array}\right)$
$A_9 = \left(\begin{array}{ccc}
    1 & 0 & 1 \\
    1 & 0 & 1 \\
    1 & 0 & 1 \\
    0 & 1 & 1 \\
    1 & 1 & 1 \\
\end{array}\right)$
$A_{10} = \left(\begin{array}{ccc}
    1 & 1 & 0 \\
    1 & 1 & 0 \\
    1 & 1 & 0 \\
    0 & 1 & 1 \\
    1 & 0 & 1 \\
\end{array}\right)$
$A_{11} = \left(\begin{array}{ccc}
    1 & 0 & 1 \\
    1 & 0 & 1 \\
    1 & 0 & 1 \\
    1 & 1 & 1 \\
    0 & 1 & 1 \\
\end{array}\right)$
$A_{12} = \left(\begin{array}{ccc}
    1 & 1 & 0 \\
    1 & 1 & 0 \\
    1 & 1 & 0 \\
    1 & 0 & 1 \\
    0 & 1 & 1 \\
\end{array}\right)$
$A_{13} = \left(\begin{array}{ccc}
    1 & 0 & 1 \\
    1 & 0 & 1 \\
    1 & 0 & 1 \\
    1 & 1 & 1 \\
    1 & 1 & 1 \\
\end{array}\right)$
$A_{14} = \left(\begin{array}{ccc}
    1 & 1 & 0 \\
    1 & 1 & 0 \\
    1 & 1 & 0 \\
    1 & 0 & 1 \\
    1 & 0 & 1 \\
\end{array}\right)$
$A_{15} = \left(\begin{array}{ccc}
    1 & 1 & 0 \\
    1 & 1 & 0 \\
    1 & 1 & 1 \\
    1 & 0 & 1 \\
    1 & 0 & 1 \\
\end{array}\right)$
$A_{16} = \left(\begin{array}{ccc}
    1 & 1 & 0 \\
    1 & 1 & 1 \\
    1 & 1 & 0 \\
    1 & 0 & 1 \\
    1 & 0 & 1 \\
\end{array}\right)$
$A_{17} = \left(\begin{array}{ccc}
    1 & 1 & 0 \\
    1 & 1 & 1 \\
    1 & 1 & 1 \\
    1 & 0 & 1 \\
    1 & 0 & 1 \\
\end{array}\right)$
$A_{18} = \left(\begin{array}{ccc}
    1 & 1 & 1 \\
    1 & 1 & 0 \\
    1 & 1 & 0 \\
    1 & 0 & 1 \\
    1 & 0 & 1 \\
\end{array}\right)$
$A_{19} = \left(\begin{array}{ccc}
    1 & 1 & 1 \\
    1 & 1 & 0 \\
    1 & 1 & 1 \\
    1 & 0 & 1 \\
    1 & 0 & 1 \\
\end{array}\right)$
$A_{20} = \left(\begin{array}{ccc}
    1 & 1 & 1 \\
    1 & 1 & 1 \\
    1 & 1 & 0 \\
    1 & 0 & 1 \\
    1 & 0 & 1 \\
\end{array}\right)$
$A_{21} = \left(\begin{array}{ccc}
    1 & 1 & 1 \\
    1 & 1 & 1 \\
    1 & 1 & 1 \\
    1 & 0 & 1 \\
    1 & 0 & 1 \\
\end{array}\right)$
\normalsize

Here is the list of $\Z_{2}$-characteristic matrices over $\cQ$.
We give a re-labeling and an order of the facet set of $\cQ$ as $\fF=(F_{1_1} , F_{1_2} , F_{2_1}, F_{3_1} , F_{3_2} , F_5, F_{2_2}, F_4 )$ such that the face structure of $\cQ$ is determined by a surjective map $\phi \colon \fF \to [5]$ defined by $\phi(F_{i_j})=1$ for $i=1,2,3$ and $j=1,2$, and $\phi(F_i)=i$ for $i=4,5$.
In this order, we consider all possible $\Z_2$-characteristic matrices over $\cQ$.
The following is the list of $B_i$'s where $(I_5 | B_i)$ is a $\Z_{2}$-characteristic matrix over $\cQ$:

\small
\noindent $B_1 = \left(\begin{array}{ccc}
    1 & 0 & 1 \\
    1 & 0 & 1 \\
    0 & 1 & 0 \\
    0 & 1 & 1 \\
    0 & 1 & 1 \\
\end{array}\right)$
$B_2 = \left(\begin{array}{ccc}
    1 & 0 & 1 \\
    1 & 1 & 0 \\
    0 & 1 & 0 \\
    0 & 1 & 1 \\
    0 & 1 & 1 \\
\end{array}\right)$
$B_3 = \left(\begin{array}{ccc}
    1 & 1 & 0 \\
    1 & 0 & 1 \\
    0 & 1 & 0 \\
    0 & 1 & 1 \\
    0 & 1 & 1 \\
\end{array}\right)$
$B_4 = \left(\begin{array}{ccc}
    1 & 1 & 0 \\
    1 & 1 & 0 \\
    0 & 1 & 0 \\
    0 & 1 & 1 \\
    0 & 1 & 1 \\
\end{array}\right)$
$B_5 = \left(\begin{array}{ccc}
    1 & 0 & 1 \\
    1 & 0 & 1 \\
    0 & 1 & 0 \\
    0 & 1 & 1 \\
    1 & 1 & 1 \\
\end{array}\right)$
$B_6 = \left(\begin{array}{ccc}
    1 & 1 & 0 \\
    1 & 1 & 0 \\
    0 & 1 & 0 \\
    0 & 1 & 1 \\
    1 & 0 & 1 \\
\end{array}\right)$
$B_7 = \left(\begin{array}{ccc}
    1 & 0 & 1 \\
    1 & 0 & 1 \\
    0 & 1 & 0 \\
    1 & 1 & 1 \\
    0 & 1 & 1 \\
\end{array}\right)$
$B_8 = \left(\begin{array}{ccc}
    1 & 1 & 0 \\
    1 & 1 & 0 \\
    0 & 1 & 0 \\
    1 & 0 & 1 \\
    0 & 1 & 1 \\
\end{array}\right)$
$B_9 = \left(\begin{array}{ccc}
    1 & 0 & 1 \\
    1 & 0 & 1 \\
    0 & 1 & 0 \\
    1 & 1 & 1 \\
    1 & 1 & 1 \\
\end{array}\right)$
$B_{10} = \left(\begin{array}{ccc}
    1 & 1 & 0 \\
    1 & 1 & 0 \\
    0 & 1 & 0 \\
    1 & 0 & 1 \\
    1 & 0 & 1 \\
\end{array}\right)$
$B_{11} = \left(\begin{array}{ccc}
    1 & 1 & 0 \\
    1 & 1 & 0 \\
    0 & 1 & 1 \\
    1 & 0 & 1 \\
    1 & 0 & 1 \\
\end{array}\right)$
$B_{12} = \left(\begin{array}{ccc}
    1 & 1 & 0 \\
    1 & 1 & 1 \\
    0 & 1 & 0 \\
    1 & 0 & 1 \\
    1 & 0 & 1 \\
\end{array}\right)$
$B_{13} = \left(\begin{array}{ccc}
    1 & 1 & 0 \\
    1 & 1 & 1 \\
    0 & 1 & 1 \\
    1 & 0 & 1 \\
    1 & 0 & 1 \\
\end{array}\right)$
$B_{14} = \left(\begin{array}{ccc}
    1 & 1 & 1 \\
    1 & 1 & 0 \\
    0 & 1 & 0 \\
    1 & 0 & 1 \\
    1 & 0 & 1 \\
\end{array}\right)$
$B_{15} = \left(\begin{array}{ccc}
    1 & 1 & 1 \\
    1 & 1 & 0 \\
    0 & 1 & 1 \\
    1 & 0 & 1 \\
    1 & 0 & 1 \\
\end{array}\right)$
$B_{16} = \left(\begin{array}{ccc}
    1 & 1 & 1 \\
    1 & 1 & 1 \\
    0 & 1 & 0 \\
    1 & 0 & 1 \\
    1 & 0 & 1 \\
\end{array}\right)$
$B_{17} = \left(\begin{array}{ccc}
    1 & 1 & 1 \\
    1 & 1 & 1 \\
    0 & 1 & 1 \\
    1 & 0 & 1 \\
    1 & 0 & 1 \\
\end{array}\right)$
$B_{18} = \left(\begin{array}{ccc}
    1 & 0 & 1 \\
    1 & 0 & 1 \\
    1 & 1 & 0 \\
    0 & 1 & 1 \\
    0 & 1 & 1 \\
\end{array}\right)$
$B_{19} = \left(\begin{array}{ccc}
    1 & 0 & 1 \\
    1 & 0 & 1 \\
    1 & 1 & 0 \\
    0 & 1 & 1 \\
    1 & 1 & 1 \\
\end{array}\right)$
$B_{20} = \left(\begin{array}{ccc}
    1 & 0 & 1 \\
    1 & 0 & 1 \\
    1 & 1 & 0 \\
    1 & 1 & 1 \\
    0 & 1 & 1 \\
\end{array}\right)$
$B_{21} = \left(\begin{array}{ccc}
    1 & 0 & 1 \\
    1 & 0 & 1 \\
    1 & 1 & 0 \\
    1 & 1 & 1 \\
    1 & 1 & 1 \\
\end{array}\right)$
\normalsize

The above lists can be found by hand-computation or using computer algorithm such as \cite[Algorithm~4.1]{Garrison-Scott2003}.

We remark that the $\Z_2$-cohomology rings of $M$ over $\cP$ associated to $A_i$ can be written by
$$H^\ast(M;\Z_2) = \Z_2 [v_{1_1}, v_{1_2}, v_{1_3}, v_{3_1}, v_{3_2}, v_5, v_2, v_4] / I_\cP + J \cong \Z_2 [x,y,z]/ I_A$$
where $x = v_5$, $y=v_2$ and $z=v_4$.

The following table is the list of generators of $I_A$ with respect to each $A_i$.
$$\begin{array}{|l|l|}
    \hline A_{1} & (x^3 y +yz^3 , y^3 + yz^2 , y^2 z + z^3 , xz, x^4)\\
    \hline A_{2},A_{3},A_{5} & (x^3 y+ x^2 y^2 + y^2 z^2 , y^3 + yz^2 , y^2 z + z^3 , xz , x^4 + x^3 y)\\
    \hline A_{4},A_{6},A_{7} & (x^3 y + y^3 z , y^3 + yz^2 , y^2 z + z^3 , xz , x^4 + x^2 y^2)\\
    \hline A_{8} & (x^3 y+ x^2 y^2 + y^4, y^3 + yz^2, y^2 z+ z^3 , xz, x^4 +y^4)\\
    \hline A_{9},A_{11} & (x^3 y+ yz^3, xy^2+y^3+yz^2,y^2z+z^3,xz,x^4)\\
    \hline A_{10},A_{12} & (x^3y+xy^3+y^4,xy^2+y^z+yz^2,yz^2+z^3,xz,x^4+y^4)\\
    \hline A_{13} & (x^3y+yz^3,x^2y+y^3+yz^2,y^2z+z^3,xz,x^4)\\
    \hline A_{14} & (x^2y^2+xy^3+y^4,x^2y+yz^2,z^3,xz,x^4+ y^4)\\
    \hline A_{15},A_{16},A_{18} & (x^4+y^4+y^3z,x^2y+yz^2,z^3,xz,x^4+x^2y^2+xy^3)\\
    \hline A_{17},A_{19}, A_{20} & (x^4+y^4+y^2z^2,x^2y+yz^2,z^3,xz,x^4+x^2y^2+xy^3)\\
    \hline A_{21} & (x^4+y^4+y^3z+y^2z^2,x^2y+yz^2,z^3,xz,x^4+x^2y^2+xy^3)\\ \hline
\end{array}$$

Similarly, the $\Z_2$-cohomology rings of $N$ over $\cQ$ associated to $B_i$ can also be written by
$$H^\ast(N;\Z_2) = \Z_2 [v_{1_1}, v_{1_2}, v_{2_1}, v_{3_1}, v_{3_2}, v_5, v_{2_2}, v_4] / I_\cP + J \cong \Z_2 [x,y,z]/ I_B$$
where $x = v_5$, $y=v_{2_2}$ and $z=v_4$.
The following table is the list of generators of $I_B$ with respect to each $B_i$.
$$\begin{array}{|l|l|}
    \hline B_{1} & (x^{2}y^{2}+y^{2}z^{2},  y^{4}+y^{2}z^{2},  y^{2}z+z^{3},  xz,  x^{3})\\
    \hline B_{2},B_{3} & (x^{2}y^{2}+xy^{3}+y^{3}z,  y^{4}+y^{2}z^{2},  y^{2}z+z^{3},  xz,  x^{3}+x^{2}y)\\
    \hline B_{4} &(x^{2}y^{2}+y^{4},  y^{4}+y^{2}z^{2},  y^{2}z+z^{3},  xz,  x^{3}+xy^{2})\\
    \hline B_{5},B_{7},B_{18} &(x^{2}y^{2}+y^{2}z^{2},  xy^{3}+y^{4}+y^{2}z^{2},  y^{2}z+z^{3},  xz,  x^{3})\\
    \hline B_{6},B_{8} & (x^{2}y^{2}+y^{4},  xy^{3}+y^{3}z+y^{2}z^{2},  yz^{2}+z^{3},  xz,  x^{3}+xy^{2})\\
    \hline B_{9},B_{19},B_{20} & (x^{2}y^{2}+y^{2}z^{2},  y^{4},  y^{2}z+z^{3},  xz,  x^{3})\\
    \hline B_{10} & (x^{2}y^{2}+y^{4},  x^{2}y^{2}+y^{2}z^{2},  z^{3},  xz,  x^{3}+xy^{2})\\
    \hline B_{11},B_{12},B_{14} & (x^{2}y^{2}+y^{4}+y^{3}z,  x^{2}y^{2}+y^{2}z^{2},  z^{3},  xz,  x^{3}+xy^{2})\\
    \hline B_{13},B_{15},B_{16} & (y^{4},  x^{2}y^{2}+y^{2}z^{2},  z^{3},  xz,  x^{3}+xy^{2})\\
    \hline B_{17} & (y^{4}+y^{3}z,  x^{2}y^{2}+y^{2}z^{2},  z^{3},  xz,  x^{3}+xy^{2})\\
    \hline B_{21} & (x^{2}y^{2}+y^{2}z^{2},  xy^{3}+y^{4},  y^{2}z+z^{3},  xz,  x^{3})\\ \hline
\end{array}$$

Let $V$ be the vector space generated by $x,y,z$ over $\Z_2$.
In order to show that there is no pair $(M,N)$ such that $H^{\ast}(M;\Z_2)\cong H^{\ast}(N;\Z_2)$, we shall check that there is no linear bijective map $V$ to $V$ which sends $I_A$ to $I_B$ for any $A_i$ and $B_j$.

For $\gamma\in\{x,y,z,x+y,x+z,y+z,x+y+z\}$, we define the \emph{codimension} of $\gamma$, denoted by $\codim \gamma$, as the minimum order of $\delta$ such that $\gamma \delta \in I$ and the \emph{order} of $\gamma$, denoted by $\ord\gamma$, as the minimum of $k$ such that $\gamma^k \in I$.

Here are the lists of codimensions and orders of all linear terms corresponding to each $A_i$.
\begin{center}
\begin{tabular}{|l|c|c|c|c|c|c|c|}
  \hline
  $\codim$ & $x$ & $y$ & $z$ & $x+y$ & $x+z$ & $y+z$ & $x+y+z$ \\ \hline
  $A_1$ &1&2&1&3&3&2&2\\ \hline
  $A_2$ &1&2&1&3&3&2&3\\ \hline
  $A_4$ &1&2&1&3&3&2&2\\ \hline
  $A_8$ &1&2&1&3&3&2&2\\ \hline
  $A_9$ &1&2&1&2&3&2&2\\ \hline
  $A_{10}$ &1&2&1&3&2&2&2\\ \hline
  $A_{13}$ &1&2&1&2&3&2&2\\ \hline
  $A_{14}$ &1&2&1&3&2&2&3\\ \hline
  $A_{15}$ &1&2&1&3&2&2&3\\ \hline
  $A_{17}$ &1&2&1&3&2&2&3\\ \hline
  $A_{21}$ &1&2&1&3&2&2&3\\
  \hline
\end{tabular}
\end{center}

\begin{center}
\begin{tabular}{|l|c|c|c|c|c|c|c|}
  \hline
  $\ord$ & $x$ & $y$ & $z$ & $x+y$ & $x+z$ & $y+z$ & $x+y+z$ \\ \hline
  $A_1$ &3&5&6&5&6&3&4\\ \hline
  $A_2$ &6&6&5&6&6&3&5\\ \hline
  $A_4$ &6&5&6&6&5&3&6\\ \hline
  $A_8$ &5&5&5&4&4&3&5\\ \hline
  $A_9$ &4&6&6&5&6&6&6\\ \hline
  $A_{10}$ &6&6&6&4&6&6&5\\ \hline
  $A_{13}$ &4&5&6&6&6&5&6\\ \hline
  $A_{14}$ &6&5&3&4&5&5&4\\ \hline
  $A_{15}$ &5&5&3&5&5&5&5\\ \hline
  $A_{17}$ &6&6&3&6&6&6&6\\ \hline
  $A_{21}$ &6&6&3&5&6&6&6\\
  \hline
\end{tabular}
\end{center}

Here are the lists of codimensions and orders of all linear terms corresponding to each $B_i$.
\begin{center}
\begin{tabular}{|l|c|c|c|c|c|c|c|}
  \hline
  $\codim$ & $x$ & $y$ & $z$ & $x+y$ & $x+z$ & $y+z$ & $x+y+z$ \\ \hline
  $B_1$ &1&3&1&3&2&2&2\\ \hline
  $B_2$ &1&3&1&2&2&2&2\\ \hline
  $B_4$ &1&3&1&2&2&2&2\\ \hline
  $B_5$ &1&3&1&3&2&2&2\\ \hline
  $B_6$ &1&3&1&2&2&2&2\\ \hline
  $B_9$ &1&3&1&3&2&2&2\\ \hline
  $B_{10}$ &1&3&1&2&2&3&2\\ \hline
  $B_{11}$ &1&3&1&2&2&3&2\\ \hline
  $B_{13}$ &1&3&1&2&2&3&2\\ \hline
  $B_{17}$ &1&3&1&2&2&3&2\\ \hline
  $B_{21}$ &1&3&1&3&2&2&2\\
  \hline
\end{tabular}
\end{center}

\begin{center}
\begin{tabular}{|l|c|c|c|c|c|c|c|}
  \hline
  $\ord$ & $x$ & $y$ & $z$ & $x+y$ & $x+z$ & $y+z$ & $x+y+z$ \\ \hline
  $B_1$ &3&6&5&6&5&4&4\\ \hline
  $B_2$ &6&5&6&5&5&4&5\\ \hline
  $B_4$ &5&6&5&4&4&4&6\\ \hline
  $B_5$ &3&5&5&6&5&6&5\\ \hline
  $B_6$ &6&5&6&4&5&5&5\\ \hline
  $B_9$ &3&4&5&4&5&6&6\\ \hline
  $B_{10}$ &5&6&3&4&5&6&4\\ \hline
  $B_{11}$ &5&5&3&6&5&6&5\\ \hline
  $B_{13}$ &5&4&3&6&5&4&6\\ \hline
  $B_{17}$ &5&6&3&5&5&5&6\\ \hline
  $B_{21}$ &3&6&5&5&5&6&6\\
  \hline
\end{tabular}
\end{center}

Note that the multi-sets of codimensions and orders of all elements in $V$ must be invariant under a linear map which sends $I_A$ to $I_B$.
Let us firstly consider codimensions.
Since codimensions of only $x$ and $z$ are all 1 in $I_A$ and $I_B$, the linear map should sends $\{x, z\}$ to $\{x, z\}$.
Then, $x+z \mapsto x+z$ via the linear map.

For $A_1,A_2,A_4,A_8,A_9,A_{13}$,
since the codimension of $x+z$ in $I$ is different from that in $I_B$ for any $B_i$, there is not a such linear map between $I_A$ and $I_B$.
For $A_{10}$, while the orders of $x$, $z$, and $x+z$ are all 6 in $I_A$, there is no $B_i$ satisfying the orders of $x$, $z$, and $x+z$ are all 6.
Thus, there is not a such linear map between $I_A$ and $I_B$.
For $A_{14}$, $A_{17}$ and $A_{21}$, while the orders of $x$ and  $z$ are 6, 3, respectively, there is no $B_i$ satisfying the set of orders of $x$ and $z$ is $\{6,3\}$ in $I_B$.
Thus, there is not a such linear map between $I_A$ and $I_B$.
Finally, for $A_{15}$, the order of each element in $V$ is either $3$ or $5$ in $I_A$. However, for any $B_i$, there is at least one element in $V$ whose order is $6$ in $I_B$.
Therefore, there is no such linear map between $I_A$ and $I_B$ for any pair of $A_i$ and $B_j$, which proves the theorem.
\end{proof}

\bigskip

\bibliographystyle{amsplain}

\begin{thebibliography}{10}
\bibitem{Bosio_A-rigid}
Fr\'{e}d\'{e}ric Bosio, \emph{Combinatorially rigid simple polytopes with $d +
  3$ facets}, preprint, arXiv:1511.09039, 2015.

\bibitem{Buchstaber2008}
Victor Buchstaber, \emph{Lectures on toric topology}, In: Proceedings of Toric
  Topology Workshop KAIST 2008, vol.~10, Trends in Math.–New Series, no.~1,
  Information Center for Mathematical Sciences, KAIST, 2008, pp.~1--64.

\bibitem{BEMPP}
Victor Buchstaber, Nikolay Erokhovets, Mikiya Masuda, Taras Panov, and
  Seonjeong Park, \emph{Cohomological rigidity of manifolds defined by
  right-angled $3$-dimensional polytopes}, preprint, arXiv:1610.07575, 2016.

\bibitem{Buchstaber-Panov2015}
Victor~M. Buchstaber and Taras~E. Panov, \emph{Toric topology}, Mathematical
  Surveys and Monographs, vol. 204, American Mathematical Society, Providence,
  RI, 2015. 

\bibitem{Choi2013}
Suyoung Choi, \emph{Different moment-angle manifolds arising from two polytopes
  having the same bigraded {B}etti numbers}, Algebr. Geom. Topol. \textbf{13}
  (2013), no.~6, 3639--3649. 

\bibitem{Choi-Kim2011}
Suyoung Choi and Jang~Soo Kim, \emph{Combinatorial rigidity of 3-dimensional
  simplicial polytopes}, Int. Math. Res. Not. IMRN (2011), no.~8, 1935--1951.


\bibitem{Choi-Panov-Suh2010}
Suyoung Choi, Taras Panov, and Dong~Youp Suh, \emph{Toric cohomological
  rigidity of simple convex polytopes}, J. Lond. Math. Soc. (2) \textbf{82}
  (2010), no.~2, 343--360. 

\bibitem{Davis-Januszkiewicz1991}
Michael~W. Davis and Tadeusz Januszkiewicz, \emph{Convex polytopes, {C}oxeter
  orbifolds and torus actions}, Duke Math. J. \textbf{62} (1991), no.~2,
  417--451.

\bibitem{Erokhovets2011}
N.~Yu. Erokhovets, \emph{Moment-angle manifolds of simple {$n$}-dimensional
  polytopes with {$n+3$} facets}, Uspekhi Mat. Nauk \textbf{66} (2011),
  no.~5(401), 187--188. 

\bibitem{Garrison-Scott2003}
Anne Garrison and Richard Scott, \emph{Small covers of the dodecahedron and the
  120-cell}, Proc. Amer. Math. Soc. \textbf{131} (2003), no.~3, 963--971.


\bibitem{Gretenkort-Kleinschmidt-Sturmfels1990}
J\"org Gretenkort, Peter Kleinschmidt, and Bernd Sturmfels, \emph{On the
  existence of certain smooth toric varieties}, Discrete Comput. Geom.
  \textbf{5} (1990), no.~3, 255--262. 

\bibitem{Grunbaum2003}
Branko Gr\"unbaum, \emph{Convex polytopes}, second ed., Graduate Texts in
  Mathematics, vol. 221, Springer-Verlag, New York, 2003, Prepared and with a
  preface by Volker Kaibel, Victor Klee and G\"unter M. Ziegler. 

\bibitem{Lopez1989}
Santiago L\'opez~de Medrano, \emph{Topology of the intersection of quadrics in
  {${\bf R}^n$}}, Algebraic topology ({A}rcata, {CA}, 1986), Lecture Notes in
  Math., vol. 1370, Springer, Berlin, 1989, pp.~280--292. 

\bibitem{Masuda-Suh2008}
Mikiya Masuda and Dong~Youp Suh, \emph{Classification problems of toric
  manifolds via topology}, Toric topology, Contemp. Math., vol. 460, Amer.
  Math. Soc., Providence, RI, 2008, pp.~273--286.

\bibitem{Park2015_thesis}
Kyoungsuk Park, \emph{Combinatorics of coxeter groups with permutation tableaux
  and cohomological rigidity of simple polytopes}, Ph.D. Thesis, Ajou
  university, 2015, pp.~1--110.
\end{thebibliography}

\end{document}